      \theoremstyle{plain}
      \newtheorem{theorem}{Theorem}[section]
      \newtheorem{lemma}[theorem]{Lemma}
      \newtheorem{corollary}[theorem]{Corollary}
      \newtheorem{proposition}[theorem]{Proposition}
      \theoremstyle{definition}
      \newtheorem{definition}[theorem]{Definition}
      \theoremstyle{remark}
\newcommand{\su}{{\mathrm{SU}(n,1)}}
\newcommand{\Sp}{\mathrm{Sp}(n,1)}
\newcommand{\hH}{\mathbf{H}^n_{\mathbb H}}
\def\C{\mathbb C}
\def\R{\mathbb R}
\def\H{\mathbb H}
\def\P{\mathbb P}
\def\<{\langle}
\def\>{\rangle}
\def\0{\mathbf{0}}
\def\hh#1{{{\bf H}^{#1}_{\H}}}
\def\[#1\]{\begin{eqnarray*}#1\end{eqnarray*}}
      \def\@setcopyright{}
      \def\serieslogo@{}
\begin{document}

%



   \author{Sungwoon Kim}
   \address{Department of Mathematics, Jeju National University, Jeju, 63243, Republic of Korea}
   \email{sungwoon@jejunu.ac.kr}

   \author{Joonhyung Kim}
   \address{Department of Mathematics Education, Jeonju University, Jeonju, 55069, South Korea}
   \email{calvary@snu.ac.kr}






   \title[Complex trace fields]{Quaternionic hyperbolic Kleinian groups with commutative trace skew-fields}


\begin{abstract}
Let $\Gamma$ be a nonelementary discrete subgroup of $\Sp$. We show that if the trace skew-field of $\Gamma$ is commutative, then $\Gamma$ stabilizes a copy of complex hyperbolic subspace of $\hH$.
\end{abstract}

\footnotetext[1]{2000 {\sl{Mathematics Subject Classification.} 22E40, 30F40, 57S30}
}

\footnotetext[2]{{\sl{Key words and phrases.} Quaternionic hyperbolic Kleinian group, Trace field}
}

\footnotetext[3]{The second author was supported by the Basic Science Research Program through the National Research Foundation of Korea (NRF) funded by the Ministry of Education, Science and Technology (NRF-2017R1C1B1003906)}



   \keywords{}

   \thanks{}
   \thanks{}

   \dedicatory{}

   \date{}


   \maketitle



\section{Introduction}

The trace field of a linear group is defined as the (skew) field generated by the traces of its elements.
The property of algebraic or geometric nature of a linear group is frequently reflected in its trace fields.
For instance, Neumann and Reid \cite{NR} proved that a nonuniform arithmetic lattice of $\mathrm{PSL}(2,\C)$
is realized over its trace field. Cunha and Gusevskii \cite{CG}, and Genzmer \cite{Ge} extended Neumann and Reid's result to some subgroups of $\mathrm{SU}(2,1)$.
These results are concerned with algebraic aspects reflected in trace fields. On the other hand, there have been many studies on geometric aspects reflected in trace fields.
Maskit \cite{Mas98} showed that if the trace field of a subgroup of $\mathrm{SL}(2,\C)$ is real, the subgroup preserves a totally geodesic subspace isometric to $\mathbf H^2_{\R}$ in $\mathbf H_{\R}^3$.
The same question concerning real trace field naturally arises in the simple Lie groups of $\su$ and $\Sp$. 
At first, in the case of $\mathrm{SU}(2,1)$, it turns out that a nonelementary discrete subgroup with real trace field stabilizes a real hyperbolic subspace $\mathbf H^2_{\R}$ of $\mathbf H^2_{\C}$ in \cite{CG,FLW12}. This result is extended to $\mathrm{SU}(3,1)$ in \cite{KK14} and moreover $\mathrm{Sp}(2,1)$ in \cite{Kim13}.
In the end, J. Kim and S. Kim \cite{KK16} answered the question for general simple Lie groups of rank $1$.
Precisely speaking, they \cite{KK16} prove that if the trace field of a nonelementary discrete subgroup of $\su$ or $\Sp$ is real, the group stabilizes a totally geodesic submanifold of constant negative sectional curvature. Note that such totally geodesic submanifold of constant negative sectional curvature is isometric to $\mathbf H^k_{\R}$ for some $2\leq k\leq n$ or $\mathbf H^1_{\C}$.

While geometric aspects reflected in real trace fields have been intensively studied, there have been no studies on commutative trace skew-fields of subgroups of $\Sp$. 
Recently, J. Kim and S. Kim \cite{KK18} showed that if a nonelementary discrete subgroup $\Gamma$ of $\mathrm{Sp}(2,1)$ has a commutative trace skew-field, it is conjugate to a subgroup of $\mathrm{U}(2,1)$.
In other words, it stabilizes a totally geodesic submanifold isometric to $\mathbf H^2_{\C}$. 

In general, the trace skew-field of a subgroup of $\Sp$ might be not commutative. Note that the field of complex numbers is one of maximal commutative skew-subfields of $\H$.
In the paper, we figure out what geometric property is reflected in commutative trace skew-fields as follows.

\begin{theorem}\label{thm:main}
Let $\Gamma$ be a nonelementary discrete subgroup of $\Sp$. If the trace skew-field of $\Gamma$ is commutative, then $\Gamma$ stabilizes a totally geodesic submanifold isometric to $\mathbf H^k_{\C}$ in $\hH$  for $1\leq k\leq n$.
\end{theorem}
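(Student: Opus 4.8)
The plan is to determine, for a nonelementary discrete $\Gamma$, the minimal totally geodesic submanifold it preserves and to read off its type from the trace skew-field. First I would record the algebra: a commutative skew-subfield $F$ of $\H$ is either $\R$ or is contained in a maximal commutative subfield $\R\oplus\R\mu$ with $\mu^2=-1$, which is a copy of $\C$. Conjugating $\Gamma$ by a unit quaternionic scalar $uI\in\Sp$ (which acts on traces by $t\mapsto utu^{-1}$) I may assume $F\subseteq\C=\R\oplus\R i$. If in fact $F=\R$, the real trace-field theorem of \cite{KK16} already gives that $\Gamma$ stabilizes a constant-curvature totally geodesic submanifold, namely some $\mathbf{H}^k_{\R}$ or $\mathbf{H}^1_{\C}$; since $\mathrm{SO}(k,1)\subset\mathrm{U}(k,1)$ stabilizes the complexification $\mathbf{H}^k_{\C}\supset\mathbf{H}^k_{\R}$, the conclusion holds in this case. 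So I would reduce to $F\cong\C$.

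Second, I would bring in the structure theory. Since $\Gamma$ is nonelementary it fixes no point and no pair of points on $\partial\hH$, so its real Zariski closure $G$ is reductive; let $H=G^{\circ}$ and let $\Sigma\subset\hH$ be the totally geodesic submanifold that is the symmetric space of the noncompact part of $H$ (equivalently, the minimal $\Gamma$-invariant totally geodesic submanifold, the hull of the limit set). Because $\hH$ has rank one, $\Sigma$ is an irreducible rank-one symmetric subspace, so by the classification of totally geodesic submanifolds of quaternionic hyperbolic space it is isometric to one of $\mathbf{H}^k_{\R}$, $\mathbf{H}^k_{\C}$, or $\mathbf{H}^k_{\H}$, and $\Gamma$ is Zariski dense in the corresponding $\mathrm{SO}(k,1)$, $\mathrm{SU}(k,1)$, or $\mathrm{Sp}(k,1)$. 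In the first two cases $\Gamma$ stabilizes $\mathbf{H}^k_{\C}$ (using again $\mathbf{H}^k_{\R}\subset\mathbf{H}^k_{\C}$), so it remains only to exclude the quaternionic case.

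Third, and this is the heart of the argument, I would rule out $\Sigma\cong\mathbf{H}^k_{\H}$ by a Zariski-density/trace computation. The vanishing of the $j$- and $k$-components of $\mathrm{tr}(g)$ is a pair of real polynomial conditions on $g$, so $\{g:\mathrm{tr}(g)\in\C\}$ is Zariski closed; as it contains the Zariski-dense $\Gamma$ it must contain all of $H$, forcing $\mathrm{tr}(h)\in\C$ for every $h\in H$. Writing $\H^{n+1}=W\oplus W^{\perp}$, where $W$ is the quaternionic subspace of signature $(k,1)$ carrying $\Sigma$, the noncompact simple factor $S\cong\mathrm{Sp}(k,1)$ of $H$ acts as the standard representation on $W$ and, having no nontrivial homomorphism into the compact group acting on the positive-definite $W^{\perp}$, acts trivially on $W^{\perp}$. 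Hence the element of $H$ acting as $\mathrm{diag}(j,1,\dots,1)$ on $W$ and trivially on $W^{\perp}$ has trace $j+n$, whose $j$-component is nonzero, contradicting the conclusion $\mathrm{tr}\in\C$. Therefore $\Sigma$ cannot be quaternionic, and $\Gamma$ stabilizes a totally geodesic $\mathbf{H}^k_{\C}$.

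I expect the main obstacle to be the structural inputs rather than the final computation: establishing that nonelementarity yields a reductive Zariski closure with a well-defined minimal invariant totally geodesic $\Sigma$ and the exact classification, so that the three cases, together with their matrix realizations having real, complex, and quaternionic entries, are genuinely exhaustive, and then verifying the decoupling $\H^{n+1}=W\oplus W^{\perp}$ with $S$ acting trivially on $W^{\perp}$. The latter is precisely what lets the trace escape $\C$ in the quaternionic case; were the compact normal factors to interact nontrivially with the quaternionic block, one would instead have to argue through the eigenvalue and rotational data of individual loxodromic elements of $\Gamma$, selecting one whose $\mathrm{Sp}(1)$-rotational factor is a unit quaternion outside $\C$, which exists by Zariski density.
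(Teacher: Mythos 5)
Your overall strategy matches the paper's: conjugate by a unit quaternionic scalar so the trace skew-field lies in $\C$, pass to the Zariski closure, use that $\{g : \mathrm{tr}(g)\in\C\}$ is Zariski closed to force every element of the closure to have complex trace, identify the possible noncompact simple factors as $\mathrm{SO}(k,1)^\circ$, $\mathrm{SU}(k,1)$, $\mathrm{Sp}(k,1)$, and exclude the last. The structural inputs you flag as the main obstacle are handled in the paper by the decomposition $(LT)\ltimes U$ of an almost Zariski closed group plus a separate lemma killing $U$ via the common fixed point of a normal parabolic subgroup, essentially as you sketch; and your separate treatment of $F=\R$ via the real trace-field theorem is fine, though the uniform argument makes it unnecessary.

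The genuine gap is in your exclusion of the quaternionic case. You take the element of $S\cong\mathrm{Sp}(k,1)$ ``acting as $\mathrm{diag}(j,1,\dots,1)$ on $W$ and trivially on $W^{\perp}$'' and assert its trace is $j+n$. But the trace of a quaternionic matrix is \emph{not} invariant under conjugation (the paper explicitly remarks on this), so that trace depends on the quaternionic orthonormal basis chosen for $W$. Writing $S=g\left(I_{n-k}\oplus\mathrm{Sp}(k,1)\right)g^{-1}$ with $g\in\Sp$ the basis-change matrix, your element is $gDg^{-1}$ with $D$ diagonal, and its trace comes out as $n+\sum_m\epsilon_m\,a_m\,j\,\overline{a_m}$, where $(a_m)$ is the relevant column of $g$ and $\epsilon_m=\pm1$ records the signature; each summand $a_m j\overline{a_m}$ is an imaginary quaternion of norm $\|a_m\|^2$ pointing in an arbitrary imaginary direction, so the sum can perfectly well lie in $\C$. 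Concretely, conjugating by the scalar matrix $qI\in\Sp$ with $qj\bar q=i$ turns $\mathrm{diag}(1,\dots,1,j)$ into $\mathrm{diag}(1,\dots,1,i)$, with trace $n+i\in\C$. Hence a single test element cannot rule out the quaternionic case: one must show that for \emph{every} conjugating $g$ \emph{some} element of $S$ has non-complex trace. This is exactly what the paper's Proposition 3.2 does: it imposes $\mathrm{tr}(gj_ng^{-1})\in\C$ and $\mathrm{tr}(gk_ng^{-1})\in\C$ simultaneously, rewrites these as Lorentzian identities for the four real coordinate vectors $v_1,\dots,v_4$ of the last column of $g$, combines them with $\|v_1\|_{n,1}^2+\cdots+\|v_4\|_{n,1}^2=-1$ coming from $g^*I_{n,1}g=I_{n,1}$, and reaches a contradiction because a vector Lorentz-orthogonal to a negative vector must be positive. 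Your closing hedge about the rotational data of loxodromic elements gestures at the right phenomenon but is not an argument; without something like Proposition 3.2 the $\mathrm{Sp}(k,1)$ case is not excluded and the proof is incomplete.
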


As a corollary, we have the following.

\begin{theorem}\label{cor:1}
Let $\Gamma$ be an irreducible subgroup of $\Sp$ such that the trace skew-field of $\Gamma$ is commutative. Then $\Gamma$ is conjugate to a subgroup of $\mathrm{U}(n,1)$.
\end{theorem}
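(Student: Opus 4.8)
The plan is to prove this by representation theory, independently of discreteness. View $\Gamma$ as acting $\H$-linearly on the right $\H$-module $V=\H^{n+1}$, preserving the form of signature $(n,1)$; irreducibility means $V$ has no proper nonzero $\Gamma$-invariant right $\H$-submodule. The pivotal reformulation is that $\Gamma$ is conjugate into $\mathrm{U}(n,1)$ exactly when it commutes with an $\H$-linear complex structure on $V$, that is, when $D:=\mathrm{End}_{\H[\Gamma]}(V)$ contains an element $\mathcal J$ with $\mathcal J^2=-I$: such a $\mathcal J$ together with the right $\H$-scalars turns $V$ into a module over $\C\otimes_\R\H\cong M_2(\C)$, and $\Gamma$ then acts through $\mathrm{U}(n,1)$ on the $(n+1)$-dimensional multiplicity space. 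By Schur's lemma, irreducibility forces $D$ to be a finite-dimensional $\R$-division algebra, hence $D\cong\R$, $\C$, or $\H$, and the entire problem is to rule out $D\cong\R$.

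The two remaining cases are immediate. If $D\cong\C$, then $D$ furnishes the complex structure $\mathcal J$ and $\Gamma$ is conjugate into $\mathrm{U}(n,1)$. If $D\cong\H$, the same mechanism via $\H\otimes_\R\H\cong M_4(\R)$ exhibits $\Gamma$ acting through $\mathrm{O}(n,1)\subset\mathrm{U}(n,1)$ on an $(n+1)$-dimensional real multiplicity space; this is the real-trace-field situation already treated in \cite{KK16}. So $D\not\cong\R$ suffices, and it remains only to show that a commutative trace skew-field is incompatible with $D\cong\R$.

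This last exclusion is where I expect the real work, and it is a double-centralizer computation over the noncommutative ground ring $\H$. Let $C$ be the $\R$-algebra generated by $\Gamma$ together with the right $\H$-action; irreducibility says $V$ is a simple $C$-module with $\mathrm{End}_C(V)=D$. If $D\cong\R$, the Jacobson density theorem gives $C=\mathrm{End}_\R(V)$, and since $\Gamma$ and the right $\H$-scalars are mutually centralizing subalgebras generating $C$, a dimension count forces the $\R$-span of $\Gamma$ inside $\mathrm{End}_\H(V)=M_{n+1}(\H)$ to be all of $M_{n+1}(\H)$. As the quaternionic trace $\mathrm{tr}\colon M_{n+1}(\H)\to\H$ is $\R$-linear and surjective (already $\mathrm{tr}(qE_{11})=q$), the $\R$-span of $\{\mathrm{tr}(\gamma):\gamma\in\Gamma\}$ would be all of $\H$, so the trace skew-field would contain $i$ and $j$ and be noncommutative—contradiction. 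Hence $D\not\cong\R$ and $\Gamma$ is conjugate into $\mathrm{U}(n,1)$. The main obstacle is bookkeeping the two-sided module structure—keeping the left $\Gamma$-action separate from the right $\H$-scalars over $\H$—so that the density and centralizer steps are valid; the trace surjectivity then closes the argument. (This dovetails with Theorem~\ref{thm:main}: the commuting $\mathcal J$ makes the stabilized complex subspace full-dimensional, so $\Gamma$ acts holomorphically and the stabilizer reduces to $\mathrm{U}(n,1)$.)
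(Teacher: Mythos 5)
Your route is genuinely different from the paper's: the paper deduces Theorem \ref{cor:1} in three lines from Theorem \ref{thm:main}, by noting that the stabilizer of a $\mathbf H^k_{\C}$ in $\Sp$ is reducible unless $k=n$; you instead give a self-contained Schur/double-centralizer argument that would bypass Theorem \ref{thm:main} (and discreteness) entirely. Your exclusion of $D\cong\R$ is correct and is the best part of the proposal: since $\Gamma$ is a group, the $\R$-algebra it generates is its $\R$-span, Jacobson density plus the dimension count over $A\otimes_\R\H^{\mathrm{op}}$ forces that span to be all of $M_{n+1}(\H)$, and surjectivity of the trace then makes the traces $\R$-span $\H$, which is incompatible with pairwise commutativity.

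The gap is in the cases $D\cong\C$ and $D\cong\H$, which you treat as ``immediate.'' Having $\mathcal J\in\mathrm{End}_{\H[\Gamma]}(V)$ with $\mathcal J^2=-I$ only gives a conjugation in $\mathrm{GL}(n+1,\H)$ taking $\mathcal J$ to $iI$ and hence $\Gamma$ into $\mathrm{GL}(n+1,\C)$; to land in $\mathrm{U}(n,1)$ you must conjugate by an element of $\Sp$ (or, equivalently, produce a $\mathcal J$ compatible with the Hermitian form), and you must check that the induced form on the multiplicity space has signature $(n,1)$. Neither point is addressed, and the first is not automatic: the adjoint involution $X\mapsto X^\star=I_{n,1}X^*I_{n,1}$ preserves $D$ (because $\gamma^\star=\gamma^{-1}$), and what you need is an imaginary unit of $D$ with $\mathcal J^\star=-\mathcal J$, i.e.\ $\mathcal J\in\Sp$. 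If $D\cong\C$ and $\star|_D=\mathrm{id}$, the only square roots of $-1$ in $D$ are self-adjoint and give an anti-isometry of the form, not an isometry; ruling this out requires a signature argument (a self-adjoint complex structure would make $(n,1)$ isometric to $(1,n)$, impossible for $n\geq 2$ but needing separate care for $n=1$). For $D\cong\H$ one must similarly analyze $\star|_D$ to find an anti-self-adjoint imaginary unit, and the sentence ``this is the real-trace-field situation already treated in \cite{KK16}'' is not a proof and does not match your hypotheses (nothing forces real traces there). These points are fixable, but they are exactly where the geometric content of the theorem lives, so as written the proposal is incomplete precisely at its main step.
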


\section{Preliminaries}

In this section, we briefly review necessary background.

\subsection{Quaternionic hyperbolic spaces}
Let $\H^{n,1}$ be a quaternionic vector space of dimension $n+1$ with a
Hermitian form of signature $(n,1)$. An element of $\H^{n,1}$ is a
column vector $p=(p_1,\ldots,p_{n+1})^t$. As in the complex hyperbolic case, we choose
the Hermitian form on $\H^{n,1}$ given by the matrix $I_{n,1}$
$$
I_{n,1}=\left[\begin{matrix} I_n & 0 \\ 0 & -1
\end{matrix}\right].
$$
Thus $\<p,q\>=q^*I_{n,1}p=\overline{q}^tI_{n,1}p=\overline{q}_1p_1+\overline{q}_2p_2+\cdot\cdot\cdot+\overline{q}_np_n-\overline{q}_{n+1}p_{n+1}$,
where $p=(p_1,\ldots,p_{n+1})^t$, $q=(q_1,\ldots,q_{n+1})^t \in \H^{n,1}$. The group $\Sp$ is the subgroup of $\mathrm{GL}(n+1,\mathbb H)$ which, when acting on the left, preserves the Hermitian form given above.

Let $\P:\H^{n,1}\setminus\{\0\} \rightarrow {\H}P^{n}$ be the
canonical projection onto a quaternionic projective space. Consider the
following subspaces in $\H^{n,1}$;
\begin{align*}
V_0 &= \{ z\in {\H}^{n,1}- \{ \0 \}\ \ |\ \ \<z,z\>=0 \ \},\\
V_- &= \{ z\in {\H}^{n,1}\ \ |\ \ \<z,z\> < 0 \ \}.
\end{align*}
The $n$-dimensional \emph{quaternionic hyperbolic space $\hH$} is
defined as $\P(V_-)$. The \emph{boundary} $\partial\hH$ is
defined as $\P(V_0)$.
There is a metric on $\hH$ called the Bergman metric and the isometry group of $\hh{n}$ with respect to this metric is
\begin{align*}
\mathrm{PSp}(n,1) &=\{[A]:A \in \mathrm{GL}(n+1,\H), \langle p,p' \rangle = \langle Ap,Ap' \rangle, p,p' \in \H^{n,1}\}\\
&= \{[A]:A \in \mathrm{GL}(n+1,\H), I_{n,1}=A^*I_{n,1}A\},
\end{align*}
where $[A]: {\H}P^{n} \rightarrow {\H}P^{n}; x\H \mapsto (Ax)\H$ for $A \in \mathrm{Sp}(n,1)$. Here we adopt the convention that the action of $\mathrm{Sp}(n,1)$ on $\hH$ is left and the action of projectivization of $\mathrm{Sp}(n,1)$ is right action. In fact $\mathrm{PSp}(n,1)$ is the quotient group by the real scalar matrices in $\Sp$. Thus it is not difficult to see that $$\mathrm{PSp}(n,1)=\Sp / \{\pm  I \}.$$

Similarly to the complex hyperbolic space, totally geodesic submanifolds of quaternionic hyperbolic space are isometric to  either $\hh{k}$, $\mathbf{H}^k_{\mathbb C}$ or $\mathbf{H}^k_{\mathbb R}$ for some $1\leq k\leq n$. Note that a totally geodesic submanifold of constant negative sectional curvature is isometric to either $\mathbf H_{\mathbb R}^k$ for some $2\leq k \leq n$, $\mathbf H_{\mathbb C}^1$ or $\mathbf H_{\mathbb H}^1$. 
The classification of isometries by their fixed points is exactly the same as in the complex hyperbolic case.

\begin{definition}
Let $\Gamma$ be a subgroup of $\Sp$. Then the \emph{trace skew-field} of $\Gamma$, denoted by $\mathbb Q(\mathrm{tr}\Gamma)$, is defined as the skew field generated by the traces of all the elements of $\Gamma$ over the base field $\mathbb{Q}$ of rational numbers.
\end{definition}

We say that the trace skew-field of $\Gamma$ is \emph{commutative} if all the elements of the trace skew-field of $\Gamma$ commute.

\subsection{Zariski topology}\label{sec:zariski}

Let $\R[x_{1,1},\ldots,x_{n,n}]$ denote the set of real polynomials in the $n^2$ variables $\{x_{j,k} \ | \ 1\leq j,k \leq n \}$.
A subset $H$ of $\mathrm{SL}(n,\R)$ is called \emph{Zariski closed} if there is a subset $\mathcal S$ of $\R[x_{1,1},\ldots,x_{n,n}]$ such that $H$ is the zero locus of $\mathcal S$. In particular, when $H$ is a subgroup of $\mathrm{SL}(n,\R)$, $H$ is called a \emph{real algebraic group}.
It is a standard fact that any Zariski closed subset of $\mathrm{SL}(n,\R)$ has only finitely many components. Furthermore, a Zariski closed subgroup of $\mathrm{SL}(n,\R)$ is a $C^\infty$-submanifold of $\mathrm{SL}(n,\R)$, hence a Lie group.

\begin{definition} The \emph{Zariski closure} of a subset $H$ of $\mathrm{SL}(n,\R)$ is the (unique) smallest Zariski closed subset of $\mathrm{SL}(n,\R)$ that contains $H$. We use $\overline H$ to denote the Zariski closure of $H$.
\end{definition}

It is well-known that if $H$ is a subgroup of $\mathrm{SL}(n,\R)$, then $\overline H$ is also a subgroup of $\mathrm{SL}(n,\R)$.

\begin{definition}
A subgroup $H$ of $\mathrm{SL}(n,\R)$ is \emph{almost Zariski closed} if $H$ is a finite-index subgroup of $\overline H$.
\end{definition}

We remark that a connected subgroup $H$ of $\mathrm{SL}(n,\R)$ is almost Zariski closed if and only if it is the identity component of a Zariski closed subgroup.

\subsection{Simple Lie subgroups of $\Sp$}\label{sec:simple}

Let $H$ be a noncompact semisimple Lie subgroup of $\Sp$ with Lie algebra $\mathfrak h \subset \mathfrak{sp}(n,1)$.
Then since the real rank of $\mathfrak{sp}(n,1)$ is $1$, all possible types for $\mathfrak h$ are listed as follows:
 $$\mathfrak{so}(m,1), \ \mathfrak{su}(k,1), \ \mathfrak{sp}(k,1) \text{ for }m=2,\ldots,n \text{ and }k=1,\ldots,n.$$
Indeed, $o_{n-k}\oplus\mathfrak{so}(k,1), \ o_{n-k}\oplus\mathfrak{su}(k,1)$ and $\ o_{n-k}\oplus\mathfrak{sp}(k,1)$ are subalgebras of $\mathfrak{sp}(n,1)$ for $1\leq k\leq n$
where $o_{n-k}$ denotes the zero square matrix of size $n-k$.
For easy of notation, hereafter we write $\mathfrak{so}(k,1), \ \mathfrak{su}(k,1)$ and $\mathfrak{sp}(k,1)$ for $o_{n-k}\oplus\mathfrak{so}(k,1), \ o_{n-k}\oplus\mathfrak{su}(k,1)$ and $\ o_{n-k}\oplus\mathfrak{sp}(k,1)$ respectively.

It is well known that there exists a unique connected Lie subgroup $H$ of $G$ whose Lie subalgebra of $G$ is $\mathfrak h$.
Hence $I_{n-k}\oplus \mathrm{SO}(k,1)^\circ, I_{n-k}\oplus \mathrm{SU}(k,1)$ and $I_{n-k}\oplus \mathrm{Sp}(k,1)$ are the unique connected Lie subgroups of $\Sp$ whose Lie subalgebras of $G$ are $\mathfrak{so}(k,1), \ \mathfrak{su}(k,1)$ and $\mathfrak{sp}(k,1)$ respectively where $\mathrm{SO}(k,1)^\circ$ is the identity component of $\mathrm{SO}(k,1)$. Note that $\mathrm{SU}(k,1)$ and $\mathrm{Sp}(k,1)$ are connected but $\mathrm{SO}(k,1)$ is not connected for all $k\geq 1$.


\section{Proof}\label{sec:proof}

We start with the observation that any maximal commutative skew-subfield of the quaternions $\H$ is similar to $\C$.

\begin{lemma}\label{lem:mas}
Let $F$ be a maximal commutative skew-subfield of $\H$. Then there exists a unit quaternion $q\in \H$ such that $qF\bar q=\C$.
\end{lemma}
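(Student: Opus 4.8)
The plan is to reduce the classification of maximal commutative skew-subfields of $\H$ to the standard fact that every pure quaternion squares to a negative real, and then to rotate an arbitrary such subfield onto the standard copy $\C = \R \oplus \R i$ by an inner automorphism. First I would recall the structure of $\H$: writing $q = a + bi + cj + dk$, the real part is $a$ and the pure part lies in the three-dimensional space $\mathrm{Im}\,\H = \R i \oplus \R j \oplus \R k$, on which the standard quaternionic norm restricts to the Euclidean norm. The key elementary observation is that a pure unit quaternion $u$ satisfies $u^2 = -1$, so $\R \oplus \R u = \R[u]$ is a subfield isomorphic to $\C$.

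Next I would show that any maximal commutative skew-subfield $F$ has the form $\R[u]$ for some pure unit quaternion $u$. Since $\R$ is central in $\H$, the field $F$ certainly contains $\R$. Pick any element $v \in F \setminus \R$ and write $v = a + w$ with $a \in \R$ and $w$ its nonzero pure part; then $u := w/|w|$ is a pure unit quaternion lying in $F$ (as $F$ is an $\R$-subalgebra), and $\R[u] \subseteq F$ is a two-dimensional commutative subfield. By maximality, $F = \R[u]$, for $\R[u]$ is already a maximal commutative subalgebra: the centralizer in $\H$ of any non-central element is exactly the subfield it generates over $\R$, which here is two-dimensional.

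Finally I would produce the conjugating unit quaternion. The rotation group $\mathrm{SO}(3)$ acting on $\mathrm{Im}\,\H$ is realized by inner automorphisms $x \mapsto qx\bar q$ with $q$ a unit quaternion, since conjugation by unit quaternions fixes $\R$ and acts on the pure part as the full $\mathrm{SO}(3)$. Hence I may choose a unit $q$ so that $q u \bar q = i$; then $qF\bar q = q\R[u]\bar q = \R[quq^{-1}] = \R[i] = \C$, using that conjugation by a unit quaternion is a ring automorphism fixing $\R$ pointwise. This gives the claim.

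The only point requiring care — and the one I would flag as the main obstacle — is the surjectivity of $\mathrm{SO}(3) \to \mathrm{SO}(3)$ via quaternion conjugation, i.e. that every pure unit quaternion can be rotated to $i$ by some $q u \bar q$. This is the classical double cover $\mathrm{Sp}(1) \to \mathrm{SO}(3)$, and one can make it explicit: given a pure unit $u \neq \pm i$, the vectors $i$ and $u$ span a plane in $\mathrm{Im}\,\H$, and a rotation by the angle between them about the axis $i \times u$ (realized by the appropriate half-angle unit quaternion $q = \cos(\theta/2) + \sin(\theta/2)\,n$, with $n$ the unit axis) sends $u$ to $i$; the degenerate case $u = -i$ is handled by conjugating with $q = j$. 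Thus the required $q$ always exists, completing the argument.
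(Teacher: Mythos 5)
Your proof is correct and uses essentially the same ingredients as the paper's: the fact that every quaternion is conjugate (via a unit quaternion) to a complex number, and the fact that the centralizer of a non-central quaternion is the two-dimensional subfield it generates. You merely run the steps in the opposite order — first classifying $F$ as $\R[u]$ for a pure unit $u$ and then rotating $u$ to $i$, whereas the paper conjugates an arbitrary non-real element of $F$ into $\C$ first and then invokes maximality — so the two arguments are interchangeable.
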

\begin{proof}
First observe that $\R$ must be contained in any maximal commutative skew-subfield of $\H$ since $\R$ is the center of $\H$. With this observation, one can easily see that $F$ is a vector space over $\R$.
Choose a non-real number $u\in F$. Since any quaternion is similar to a complex number, there exists a unit quaternion $q \in \H$ with $qu\bar q \in \C$.
Clearly, $qF\bar q$ is again a maximal commutative skew-subfield of $\H$ and moreover, it contains a complex number $qu\bar q$ that is not real. By the observation in the beginning of the proof, one can see that the imaginary unit $i$ is contained in $qF\bar q$.
Furthermore, it is straightforward to show that if a quaternion commutes with $i$, it should be a complex number. Therefore we conclude that $qF\bar q=\C$.
\end{proof}

Let $\Gamma$ be a nonelementary discrete subgroup of $\Sp$ whose trace skew-field is commutative.
Then the trace skew-field is contained in a maximal commutative skew-subfield $F$ of $\H$.
By Lemma \ref{lem:mas}, there is a unit quaternion $q \in \H$ such that $q F \bar q=\C$. Let $Q$ be the diagonal matrix of size $n+1$ whose diagonal entries are all $q$.
Then $Q\in \Sp$ and the trace skew-field of $Q\Gamma Q^{-1}$ is a subfield of $\C$.
In other words, by conjugation, we may assume that the trace skew-field of $\Gamma$ is contained in $\C$.

\subsection{Embedding of $\Sp$ into $\mathrm{SL}(4n+4,\mathbb R)$}
The correspondence $$a+bi+cj+dk \mapsto \left[ \begin{array}{rrrr} a & b & c & d \\ -b & a & -d & c \\ -c & d & a & -b \\ -d & -c & b & a \end{array} \right]$$ 
induces a homomorphism $\theta : \Sp \rightarrow \mathrm{GL}(4n+4,\mathbb R)$. It is easy to check that $\theta$ is an injective homomorphism and $\theta(g^*)=\theta(g)^t$.
Hence the relation $g^*I_{n,1}g=I_{n,1}$ implies that 
\begin{align*}
\det(\theta(g^*)\theta(I_{n,1})\theta(g))&=\det(\theta(g)^t)\det(\theta(g)) \\ &=\det(\theta(g))^2 \\&=\det(\theta(I_{n,1}))=1.
\end{align*}
This means that for any $g\in \Sp$ the determinant of $\theta(g)$ is either $1$ or $-1$.
Since $\Sp$ is connected and the determinant function is continuous, it follows that $\det(\theta(g))=1$ for all $g\in \Sp$.
Thus $\theta$ is an embedding of $\Sp$ into $\mathrm{SL}(4n+4,\mathbb R)$.

\subsection{Matrices with complex traces}

For an element $g$ of $\Sp$, define the trace of $g$, denoted by $tr(g)$, as the sum of diagonal entries of $g$.
We remark that the trace is not invariant under conjugation in $\Sp$. 
Define a subset $\mathrm{Tr}(\C)$ of $\Sp$ by $$\mathrm{Tr}(\C)=\{ g\in \Sp \ | \ tr(g) \in \mathbb C\}.$$
Let $d_m=a_m+b_mi+c_mj+d_mk$ be the $(m,m)$-entry of $g \in \Sp$ for $1\leq m\leq n+1$.
Then $$tr(g)\in \mathbb C \Longleftrightarrow\sum_{m=1}^{n+1} c_m = \sum_{m=1}^{n+1} d_m=0.$$
From this observation, it follows that $\theta(\mathrm{Tr}(\C))$ is a Zariski closed subset of $\mathrm{SL}(4n+4,\mathbb R)$.

Since the trace of each element of $\Gamma$ is a complex number, $\theta(\Gamma) \subset \theta(\mathrm{Tr}(\C))$.
To ease notation, we write $\Gamma_\theta=\theta(\Gamma)$ and $\mathrm{Tr}_\theta(\C)=\theta(\mathrm{Tr}(\C))$.
The set $\mathrm{Tr}_\theta(\C)$ is Zariski closed and hence the Zariski closure $\overline{\Gamma}_\theta$ of $\Gamma_\theta$ is a subset of $\mathrm{Tr}_\theta(\C)$.
This means that the trace of every element of $\overline{\Gamma}_\theta$ is also a complex number.

\subsection{Structure of almost Zariski closed groups} \label{sec:szcg}
The Zariski closure $\overline{\Gamma}_\theta$ is a Zariski-closed subgroup of $\mathrm{SL}(4n+4,\mathbb R)$ with finitely many connected components.
Thus the identity component $\overline{\Gamma}_\theta^\circ$ of $\overline{\Gamma}_\theta$ is an almost Zariski closed subgroup of $\mathrm{SL}(4n+4,\mathbb R)$.
Applying Theorem 4.4.7 in \cite{Mor} for the structure of almost Zariski closed groups, there exist
\begin{itemize}
\item a semisimple subgroup $L$ of $\overline{\Gamma}_\theta^\circ$,
\item a torus $T$ in $\overline{\Gamma}_\theta^\circ$, and
\item a unipotent subgroup $U$ of $\overline{\Gamma}_\theta^\circ$,
\end{itemize}
such that
\begin{itemize}
\item $\overline{\Gamma}_\theta^\circ=(LT)\ltimes U$,
\item $L, T$, and $U$ are almost Zariski closed, and
\item $L$ and $T$ centralize each other and have finite intersection.
\end{itemize}

Let $H$ be a noncompact simple factor of $L$. If there are no noncompact simple factors of $L$, then $L$ is compact and hence $\overline{\Gamma}_\theta^\circ$ is amenable.
This implies that $\Gamma$ is also amenable, which contradicts the assumption that $\Gamma$ is nonelementary.
Thus there is a noncompact simple factor $H$ of $L$.
The Lie algebra $\mathfrak h$ of $H$ is isomorphic to one of the following.
 $$\mathfrak{so}(m,1), \ \mathfrak{su}(k,1), \ \mathfrak{sp}(k,1) \text{ for }m=2,\ldots,n \text{ and }k=1,\ldots,n.$$
Observing noncompact simple Lie subgroups of $\Sp$ in Section \ref{sec:simple}, it follows that $H$ is isomorphic to one of the following.
 $$\mathrm{SO}(k,1)^\circ, \ \mathrm{SU}(k,1), \ \mathrm{Sp}(k,1) \text{ for }m=2,\ldots,n \text{ and }k=1,\ldots,n.$$
The condition that the trace of every element of $H$ is a complex number will exclude the case where $H$ is isomorphic to $\mathrm{Sp}(k,1)$ for $1\leq k\leq n$. To prove this, we start with the following Proposition.

\begin{proposition}\label{prop:sp}
Let $1\leq k\leq n$.
There is no element $g \in \Sp$ such that every element of $g \left( I_{n-k}\oplus \mathrm{Sp}(k,1) \right)g^{-1}$ has its trace a complex number.
\end{proposition}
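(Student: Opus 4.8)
The plan is to argue by contradiction. Suppose some $g\in\Sp$ makes every element of $g\bigl(I_{n-k}\oplus\mathrm{Sp}(k,1)\bigr)g^{-1}$ have complex trace, and abbreviate $S=I_{n-k}\oplus\mathrm{Sp}(k,1)$. Since the quaternionic trace is $\mathbb R$-linear on $M_{n+1}(\H)$ and $\C$ is an $\mathbb R$-subspace, this property passes to real linear combinations: every matrix in $g\,\mathcal B_0\,g^{-1}$ has trace in $\C$, where $\mathcal B_0$ denotes the $\mathbb R$-span of $S$ inside $M_{n+1}(\H)$.

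First I would reformulate the complex-trace condition as an orthogonality relation. On $M_{n+1}(\H)$ consider the bilinear form $\langle\langle A,B\rangle\rangle=\mathrm{Re}\,tr(AB)$; it is nondegenerate, and because $\mathrm{Re}(xy)=\mathrm{Re}(yx)$ in $\H$ it is invariant under cyclic permutation. A short computation shows, for $Y\in M_{n+1}(\H)$, that $tr(Y)\in\C$ if and only if $\langle\langle jI_{n+1},Y\rangle\rangle=\langle\langle kI_{n+1},Y\rangle\rangle=0$, these two pairings reading off the $j$- and $k$-components of $tr(Y)$. Applying this to $Y=gXg^{-1}$ and using cyclic invariance, the condition on $X$ becomes $\langle\langle X,M_j\rangle\rangle=\langle\langle X,M_k\rangle\rangle=0$, where $M_j=g^{-1}(jI_{n+1})g$ and $M_k=g^{-1}(kI_{n+1})g$. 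Thus the hypothesis says $M_j,M_k\in\mathcal B_0^{\perp}$; in fact I will only need $M_j$.

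Next I would pin down $\mathcal B_0$. The crucial input is that $\mathrm{Sp}(k,1)$ $\mathbb R$-spans the full matrix algebra $M_{k+1}(\H)$: complexifying via $M_{k+1}(\H)\otimes_{\mathbb R}\C\cong M_{2k+2}(\C)$, the group $\mathrm{Sp}(k,1)$ is Zariski dense in its complexification $\mathrm{Sp}(k+1,\C)$, which acts irreducibly on $\C^{2k+2}$, so by Burnside's theorem the $\C$-span is all of $M_{2k+2}(\C)$, forcing $\mathrm{span}_{\mathbb R}\mathrm{Sp}(k,1)=M_{k+1}(\H)$. Consequently $\mathcal B_0$ contains the entire lower-right $(k+1)\times(k+1)$ block indexed by $K=\{n-k+1,\dots,n+1\}$. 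Pairing $M_j$ against this block and using the nondegeneracy of $\mathrm{Re}(xy)$ on $\H$, the relation $M_j\in\mathcal B_0^{\perp}$ forces the entire $K\times K$ block of $M_j=g^{-1}(jI_{n+1})g$ to vanish.

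Finally I would extract a signature contradiction. Let $V_K=\mathrm{span}_{\H}(e_p:p\in K)$ and $\tilde W=gV_K$; since $g$ preserves $\langle\cdot,\cdot\rangle$ and $V_K$ contains the negative coordinate $e_{n+1}$, the space $\tilde W$ carries signature $(k,1)$, while its orthogonal complement $\tilde W^{\perp}=g\,\mathrm{span}_{\H}(e_p:p\le n-k)$ is positive definite. Vanishing of the $K\times K$ block of $g^{-1}(jI_{n+1})g$ says precisely that $jI_{n+1}$ maps $\tilde W$ into $\tilde W^{\perp}$. But $jI_{n+1}\in\Sp$ is an isometry of $\langle\cdot,\cdot\rangle$ (because $\overline{j}\,j=1$), so $(jI_{n+1})ge_{n+1}$ is a nonzero vector of negative norm lying inside the positive-definite subspace $\tilde W^{\perp}$, which is impossible. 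This contradiction proves the Proposition. I expect the spanning statement $\mathrm{span}_{\mathbb R}\mathrm{Sp}(k,1)=M_{k+1}(\H)$ to be the main obstacle; once it is available everything reduces to linear algebra together with the observation that $jI_{n+1}$ is itself an isometry. Care is also needed with the quaternionic conventions, in particular the identity $(g^{-1})_{rp}=\epsilon_r\epsilon_p\,\overline{g_{pr}}$ coming from $g^{-1}=I_{n,1}g^{*}I_{n,1}$, and with the noncommutativity that makes $tr$ only cyclically invariant in its real part.
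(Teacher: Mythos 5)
Your argument is correct, but it takes a genuinely different route from the paper's. The paper tests the hypothesis on just two explicit elements of $I_{n-k}\oplus \mathrm{Sp}(k,1)$, the diagonal matrices $j_n=\mathrm{diag}(1,\dots,1,j)$ and $k_n=\mathrm{diag}(1,\dots,1,k)$: writing out $tr(gj_ng^{-1})$ and $tr(gk_ng^{-1})$ in coordinates via $g^{-1}=I_{n,1}g^*I_{n,1}$, imposing that both traces be complex, and adding the normalization $\sum_m\|a_{m,n+1}\|^2-\|a_{n+1,n+1}\|^2=-1$, it extracts four real vectors $v_1,\dots,v_4\in\R^{n,1}$ from the last column of $g$ satisfying $\|v_1\|_{n,1}^2=\|v_2\|_{n,1}^2$, $\|v_3\|_{n,1}^2=\|v_4\|_{n,1}^2$, $2(\|v_1\|^2+\|v_3\|^2)=-1$ and $\langle v_1,v_2\rangle_{n,1}=\langle v_3,v_4\rangle_{n,1}=0$, which forces two mutually orthogonal vectors of negative Lorentzian norm --- impossible. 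You instead use the whole subgroup: the spanning fact $\mathrm{span}_{\R}\mathrm{Sp}(k,1)=M_{k+1}(\H)$ (complexification plus Zariski density plus Burnside), cyclic invariance of $\mathrm{Re}\,tr$, and nondegeneracy of the pairing to kill the $K\times K$ block of $g^{-1}(jI_{n+1})g$, and then reach a signature contradiction of the same flavor: the isometry $jI_{n+1}$ would send the negative vector $ge_{n+1}$ into the positive-definite subspace $(gV_K)^{\perp}$. Each of your steps checks out (in particular $\langle\langle jI_{n+1},Y\rangle\rangle$ does read off the $j$-part of $tr(Y)$, and the dimension count $\dim_{\C}M_{2k+2}(\C)=\dim_{\R}M_{k+1}(\H)$ makes the complexification argument close). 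What the paper's computation buys is a strictly stronger statement --- only the two conjugated elements $gj_ng^{-1}$ and $gk_ng^{-1}$ need complex traces --- and complete elementarity; what yours buys is conceptual transparency, no coordinate computation, and the mild bonus that only the $j$-component of the trace condition is ever used. The Burnside/Zariski-density spanning lemma that you rightly flag as the main obstacle is genuinely needed in your route and has no counterpart in the paper's.
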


\begin{proof}
To obtain a contradiction, we suppose that for some $g\in \Sp$, the trace of every element of $g \left( I_{n-k}\oplus \mathrm{Sp}(k,1) \right) g^{-1}$ is a complex number.
Let $a_{p,q}$ denote the $(p,q)$-entry of $g$ for $1\leq p, q \leq n+1$. Since $g$ satisfies the equation $g^* I_{n,1}g=I_{n,1}$, the inverse $g^{-1}$ of $g$ is written as
$$g^{-1} = \left[ \begin{array}{cccc} a_{1,1}^* & \cdots & a_{n,1}^* & - a_{n+1, 1}^* \\ \vdots & \ddots & \vdots &\vdots \\ a_{1,n}^* & \cdots & a_{n,n}^* & -a_{n+1, n}^* \\ -a_{1, n+1}^* & \cdots & - a_{n, n+1}^* & a_{n+1,n+1}^* \end{array} \right].$$

Let $j_n$ be the diagonal matrix  of size $n+1$ with diagonal entries $1,\ldots,1, j$ and $k_n$ be the diagonal matrix  of size $n+1$ with diagonal entries $1,\ldots,1,k$.
Obviously $j_n$ and $k_n$ are elements of $I_{n-k}\oplus \mathrm{Sp}(k,1)$ for any $1\leq k\leq n$.
By a straight computation, the trace of $g j_n g^{-1}$ is 
\begin{align}
\sum_{m=1}^n \sum_{l=1}^n & \|a_{m,l}\|^2 -\sum_{m=1}^n\|a_{n+1,m}\|^2 \label{eqn:trgjg1} \\ 
&-\sum_{m=1}^n a_{m,n+1}   j  a_{m,n+1}^* + a_{n+1,n+1}j  a_{n+1,n+1}^*. \label{eqn:trgjg2}
\end{align}
By assumption, the trace of $g j_n g^{-1}$ is a complex number. Every term in (\ref{eqn:trgjg1}) is real and thus $tr(gj_ng^{-1}) \in \mathbb C$ is equivalent to 
\begin{equation}\label{eqn:1}
\sum_{m=1}^n a_{m,n+1}  j  a_{m,n+1}^* - a_{n+1,n+1} j  a_{n+1,n+1}^* \in \mathbb C.
\end{equation}
Similarly, it follows from a straightforward computation that $tr(g k_n g^{-1})\in \mathbb C$ is equivalent to 
\begin{equation}\label{eqn:2}
\sum_{m=1}^n a_{m,n+1}   k a_{m,n+1}^* - a_{n+1,n+1} k  a_{n+1,n+1}^* \in \mathbb C.
\end{equation}
Furtheremore, the identity $g^* I_{n,1}g=I_{n,1}$ gives us that  
\begin{equation}\label{eqn:3}
\sum_{m=1}^n \| a_{m,n+1}\|^2 - \| a_{n+1,n+1}\|^2 =-1.
\end{equation}

Set $a_{m,n+1}=x_{m,1}+x_{m,2}i+x_{m,3}j+x_{m,4}k$ for $1\leq m\leq n+1$. Then it is easy to see that
\begin{align*}
a_{m,n+1} j  a_{m,n+1}^* = 2(&x_{m,2}  x_{m,3}-x_{m,1}x_{m,4})i +(x_{m,1}^2-x_{m,2}^2+x_{m,3}^2-x_{m,4}^2)j \\ &+2(x_{m,1}x_{m,2}+x_{m,3}x_{m,4})k
\end{align*}
and, 
\begin{align*}
a_{m,n+1} k  a_{m,n+1}^* = 2(&x_{m,1}  x_{m,3}+ x_{m,2}x_{m,4})i  
+2(-x_{m,1}x_{m,2}+x_{m,3}x_{m,4})j\\ &+(x_{m,1}^2-x_{m,2}^2-x_{m,3}^2+x_{m,4}^2)k. 
\end{align*}
The condition (\ref{eqn:1}) means that the $j$-part and $k$-part of the term in (\ref{eqn:1}) are all zero. Together with the identities for $a_{m,n+1} j  a_{m,n+1}^* $, and $a_{m,n+1} k  a_{m,n+1}^*$ above, we get the following equations:
\begin{align*}
& \sum_{m=1}^n \left( x_{m,1}^2-x_{m,2}^2+x_{m,3}^2-x_{m,4}^2\right)- \left(x_{n+1,1}^2-x_{n+1,2}^2+x_{n+1,3}^2-x_{n+1,4}^2\right)=0, \\
&  \sum_{m=1}^n 2\left( x_{m,1}x_{m,2}+x_{m,3}x_{m,4}\right)- 2\left( x_{n+1,1}x_{n+1,2}+x_{n+1,3}x_{n+1,4}\right)=0,\\
&  \sum_{m=1}^n 2\left( -x_{m,1}x_{m,2}+x_{m,3}x_{m,4}\right)- 2\left(- x_{n+1,1}x_{n+1,2}+x_{n+1,3}x_{n+1,4}\right)=0, \\
& \sum_{m=1}^n \left( x_{m,1}^2-x_{m,2}^2-x_{m,3}^2+x_{m,4}^2\right)- \left(x_{n+1,1}^2-x_{n+1,2}^2-x_{n+1,3}^2+x_{n+1,4}^2\right)=0.
\end{align*}

Let $\mathbb R^{n,1}$ be the usual Lorentzian space with the Lorentzian inner product $\langle \cdot, \cdot\rangle_{n,1}$ defined by 
$$\langle x,y\rangle_{n,1}= x_1y_1+\cdots+x_ny_n-x_{n+1}y_{n+1}$$
for vectors $x=(x_1,\ldots,x_{n+1})$,  $y=(y_1,\ldots,y_{n+1})$.
The squared norm of a vector $x=(x_1,\ldots,x_{n+1})$ in the Lorentzian space is written as 
$$\|x\|_{n,1}^2=x_1^2+\cdots+x_n^2-x_{n+1}^2.$$

Set $v_m=(x_{1,m},\ldots,x_{n+1,m})$ for $m=1,2,3,4$. Then the above four equations are reformulated as follows:
\begin{equation}\label{eqnsys_spn}
\begin{aligned}
& \|v_1 \|_{n,1}^2 - \| v_2\|_{n,1}^2 + \| v_3 \|_{n,1}^2-\| v_4\|_{n,1}^2=0, \\
& \langle v_1, v_2 \rangle_{n,1}+ \langle v_3, v_4 \rangle_{n,1}=0,\\
& \langle v_1, v_2 \rangle_{n,1}- \langle v_3, v_4 \rangle_{n,1}=0, \\
& \|v_1 \|_{n,1}^2 - \| v_2\|_{n,1}^2 - \| v_3 \|_{n,1}^2+\| v_4\|_{n,1}^2=0. 
     \end{aligned} 
\end{equation}
In addition, from (\ref{eqn:3}),
\begin{equation*}
\|v_1 \|_{n,1}^2 + \| v_2\|_{n,1}^2 + \| v_3 \|_{n,1}^2+\| v_4\|_{n,1}^2=-1.
\end{equation*}
Solving all equations simultaneously provides the following results.
\begin{align}
& \| v_1\|_{n,1}^2 =\| v_2\|_{n,1}^2, \ \| v_3 \|_{n,1}^2 =\| v_4\|_{n,1}^2, \ 2\left(\|v_1\|^2+\|v_3\|^2\right)=-1, \label{eqn:norm}\\ 
& \langle v_1, v_2 \rangle_{n,1} = \langle v_3, v_4 \rangle_{n,1}=0. \label{eqn:perp}
\end{align}
Due to $ 2\left(\|v_1\|^2+\|v_3\|^2\right)=-1$, either $v_1$ or $v_3$ has a negative Lorentzian norm. If $v_1$ has a negative Lorentzian norm, so does $v_2$ by (\ref{eqn:norm}). Moreover $v_2 \in v_1^\bot$ by (\ref{eqn:perp}). However this contradicts the fact that every vector perpendicular to a negative vector in the Lorentzian space has a positive Lorentzian norm.
In the case that $v_3$ has a negative Lorentzian norm, we also get a similar contradiction. Therefore for any $g\in \Sp$, the set of traces of elements of $g \left( I_{n-k}\oplus \mathrm{Sp}(k,1) \right) g^{-1}$ can not be contained in $\mathbb C$.
\end{proof}

As a corollary, we exclude the case that $H$ is isomorphic to $\mathrm{Sp}(k,1)$ as follows.

\begin{corollary}\label{cor:sp}
The noncompact simple factor of $\overline{\Gamma}_\theta^\circ$ is not isomorphic to $\mathrm{Sp}(k,1)$ for any $1\leq k\leq n$.
\end{corollary}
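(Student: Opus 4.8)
The plan is to deduce the Corollary directly from Proposition \ref{prop:sp}, the bridge being that any subgroup of $\theta(\Sp)$ isomorphic to $\mathrm{Sp}(k,1)$ is, up to conjugation, the image under $\theta$ of the standard copy $I_{n-k}\oplus \mathrm{Sp}(k,1)$. Accordingly I would argue by contradiction: suppose the noncompact simple factor $H$ of $\overline{\Gamma}_\theta^\circ$ is isomorphic to $\mathrm{Sp}(k,1)$ for some $1\leq k\leq n$.

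First I would locate $H$ back inside $\theta(\Sp)$. Since $\theta$ is given by polynomials in the matrix entries and $\Sp$ is a real algebraic group, the image $\theta(\Sp)$ is Zariski closed in $\mathrm{SL}(4n+4,\R)$; as $\Gamma_\theta\subset\theta(\Sp)$, its Zariski closure satisfies $\overline{\Gamma}_\theta\subset\theta(\Sp)$, and in particular $H\subset\theta(\Sp)$. The factor $H$ is connected, being a simple factor of the connected semisimple group $L$, so because $\theta$ is an isomorphism onto its image, $H=\theta(H')$ for a unique connected Lie subgroup $H'$ of $\Sp$ whose Lie algebra $\mathfrak h'\subset\mathfrak{sp}(n,1)$ is isomorphic to $\mathfrak{sp}(k,1)$.

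Next I would use the classification of Section \ref{sec:simple} to normalize $H'$. Any subalgebra of $\mathfrak{sp}(n,1)$ isomorphic to $\mathfrak{sp}(k,1)$ is conjugate under $\Sp$ to the standard copy $o_{n-k}\oplus\mathfrak{sp}(k,1)$; geometrically, $H'$ stabilizes a totally geodesic copy of $\hh{k}$ in $\hh{n}$, and $\Sp$ acts transitively on such subspaces. Since $\mathrm{Sp}(k,1)$ is connected, the uniqueness of the connected subgroup with a prescribed Lie algebra forces $H'=g\left(I_{n-k}\oplus\mathrm{Sp}(k,1)\right)g^{-1}$ for some $g\in\Sp$.

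Finally I would invoke the trace condition. It was shown just before Proposition \ref{prop:sp} that every element of $\overline{\Gamma}_\theta$ has complex trace; since $H\subset\overline{\Gamma}_\theta^\circ$, every element of $H=\theta(H')$ has complex trace, that is, $tr(h)\in\C$ for all $h\in g\left(I_{n-k}\oplus\mathrm{Sp}(k,1)\right)g^{-1}$. This directly contradicts Proposition \ref{prop:sp}, so $H$ cannot be isomorphic to $\mathrm{Sp}(k,1)$ for any $1\leq k\leq n$. I expect the main obstacle to be the conjugacy normalization of the third step, namely verifying that an abstract $\mathfrak{sp}(k,1)$ sitting inside $\mathfrak{sp}(n,1)$ is genuinely $\Sp$-conjugate to the standard embedding; by contrast, the Zariski-closedness of $\theta(\Sp)$ and the trace bookkeeping are routine.
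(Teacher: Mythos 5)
Your proposal is correct and follows essentially the same route as the paper: assume $H\cong\mathrm{Sp}(k,1)$, use the conjugacy of all such Lie subgroups of $\Sp$ to write $H=\theta\left(g\left(I_{n-k}\oplus\mathrm{Sp}(k,1)\right)g^{-1}\right)$, and contradict Proposition \ref{prop:sp} via the complex-trace condition on $\overline{\Gamma}_\theta$. The only difference is that you spell out the intermediate steps (Zariski closedness of $\theta(\Sp)$ and the conjugacy normalization) that the paper simply asserts.
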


\begin{proof}
Suppose, to derive a contradiction, that $H$ is isomorphic to $\mathrm{Sp}(k,1)$ for some $1\leq k\leq n$.
Since all Lie subgroups of $\Sp$ isomorphic to $\mathrm{Sp}(k,1)$ are conjugate to each other, there is an element $g \in \Sp$ such that 
$$H= \theta\left( g \left( I_{n-k}\oplus \mathrm{Sp}(k,1) \right) g^{-1} \right) \text{ and } H\subset \mathrm{Tr}_\theta(\C).$$
However Proposition \ref{prop:sp} leads to the contradiction that any Lie subgroup of $\Sp$ that is isomorphic to $\mathrm{Sp}(k,1)$ can not be contained in $\mathrm{Tr}(\C)$, which finishes the proof.
\end{proof}

We now turn to the unipotent subgroup $U$ in the decomposition $\overline{\Gamma}_\theta^\circ=(LT)\ltimes U$.

\begin{lemma}\label{lem:unip}
The unipotent subgroup $U$  in the decomposition $\overline{\Gamma}_\theta^\circ=(LT)\ltimes U$ is trivial.
\end{lemma}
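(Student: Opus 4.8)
The plan is to argue by contradiction: a nontrivial $U$ will be shown to force $\Gamma$ to fix a point of $\partial\hH$, contradicting the standing assumption that $\Gamma$ is nonelementary. First I would transfer everything back to $\Sp$. Since $\mathrm{Tr}(\C)\subseteq\Sp$ we have $\mathrm{Tr}_\theta(\C)=\theta(\mathrm{Tr}(\C))\subseteq\theta(\Sp)$, and the excerpt already gives $\overline{\Gamma}_\theta\subseteq\mathrm{Tr}_\theta(\C)$; hence $\overline{\Gamma}_\theta\subseteq\theta(\Sp)$ and it makes sense to set $\widetilde U=\theta^{-1}(U)$, a connected unipotent subgroup of $\Sp$. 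Assuming $U\neq\{e\}$, the group $\widetilde U$ is nontrivial, and every one of its nonidentity elements is unipotent, hence neither elliptic nor hyperbolic, so it is a parabolic isometry of $\hH$ with a single fixed point on $\partial\hH$.

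The next step is to produce a common fixed point of $\widetilde U$ at infinity. Here I would invoke the rank-one structure of $\Sp$: every connected unipotent subgroup is conjugate into the unipotent radical $N$ of a minimal parabolic subgroup, and the conjugates of $N$ are exactly the unipotent radicals $N_\xi$ of the point stabilizers $\mathrm{Stab}(\xi)$, $\xi\in\partial\hH$. Thus there is a boundary point $\xi$ with $\widetilde U\subseteq N_\xi$, so every element of $\widetilde U$ fixes $\xi$. This $\xi$ is moreover the \emph{unique} common fixed point: any second fixed point $\eta\neq\xi$ of a nontrivial $u\in\widetilde U$ would make $u$ fix two distinct boundary points, contradicting that a nontrivial parabolic fixes exactly one.

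Then I would upgrade this fixed point to all of $\overline{\Gamma}_\theta$. The subgroup $U$ is the unipotent radical of $\overline{\Gamma}_\theta^\circ$, hence characteristic in $\overline{\Gamma}_\theta^\circ$; since $\overline{\Gamma}_\theta^\circ$ is normal in $\overline{\Gamma}_\theta$, conjugation by any $g\in\overline{\Gamma}_\theta$ restricts to an automorphism of $\overline{\Gamma}_\theta^\circ$ and so preserves $U$. Transporting through $\theta$, the group $\overline{\Gamma}:=\theta^{-1}(\overline{\Gamma}_\theta)$ normalizes $\widetilde U$. For $g\in\overline{\Gamma}$ and a nontrivial $u\in\widetilde U$, the conjugate $gug^{-1}$ again lies in $\widetilde U$, so it fixes $\xi$; but $gug^{-1}$ also fixes $g\xi$, and being a nontrivial parabolic it has only one fixed point, whence $g\xi=\xi$. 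Therefore $\overline{\Gamma}$ fixes $\xi$, and in particular $\Gamma\subseteq\overline{\Gamma}$ fixes $\xi\in\partial\hH$. Then $\Gamma$ is contained in the stabilizer of a boundary point, which is amenable, so $\Gamma$ is elementary, contradicting the hypothesis; this forces $U=\{e\}$.

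I expect the main obstacle to be the structural input used in the second step, namely that a connected unipotent subgroup of the rank-one group $\Sp$ is conjugate into the unipotent radical of a minimal parabolic, so that it has a genuine common fixed point at infinity. Once that is in place, the remaining ingredients — the uniqueness of the fixed point of a nontrivial parabolic, the characteristic-subgroup bookkeeping identifying $U$ as normal in $\overline{\Gamma}_\theta$, and the amenability of a point stabilizer — are all routine.
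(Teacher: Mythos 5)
Your proposal is correct and follows essentially the same route as the paper: identify the elements of $U$ as parabolic isometries by conjugating the connected unipotent group into the unipotent radical of a parabolic subgroup of $\Sp$ (the paper invokes Borel--Tits plus the Langlands decomposition for this), extract the unique common fixed point $\xi\in\partial\hH$, and use normality of $U$ to force all of $\Gamma$ to fix $\xi$, contradicting nonelementarity. Your extra step passing from $\overline{\Gamma}_\theta^{\circ}$ to $\overline{\Gamma}_\theta$ via the characteristic-subgroup argument is in fact slightly more careful than the paper's wording, but it is not a different proof.
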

\begin{proof}
We first prove that every element of $U$ is the $\theta$-image of a parabolic isometry of $\hH$.
By the Borel-Tits theorem, there is a parabolic subgroup $P$ of $\theta(\Sp)$ such that the unipotent subgroup $U$ of $\theta(\Sp)$ is contained in the unipotent radical $N$ of $P$.
Then $P$ admits the Langlands decomposition $P=MAN$, where $A$ is the $\mathbb R$-split torus and $N$ is the unipotent radical of $P$. 
In particular, for some $a_\theta\in A$, we have
$$N=\left\{ g_\theta\in \theta(\Sp) \ \Big| \ \lim_{m\rightarrow \infty} a_\theta^{-m}g_\theta a^m_\theta=e_\theta\right\},$$
where $e_\theta$ is the identity element of $\theta(\Sp)$.
Putting $\theta(a)=a_\theta$, $\theta(g)=g_\theta$ and $\theta(e)=e_\theta$,  
$$\lim_{m\rightarrow \infty} a_\theta^{-m}g_\theta a^m_\theta=\lim_{m\rightarrow \infty} \theta(a^{-m}g a^m)=\theta(e).$$
Since $\theta$ is an embedding, $\lim_{m\rightarrow \infty} a^{-m}g a^m=e$. This implies that the $\theta$-preimage of every element of $N$ is a parabolic isometry of $\hH$, thereby showing that the $\theta$-preimage of every element of $U$ is parabolic, as desired.

To obtain a contradiction, suppose that $U$ is not trivial. 
Let $u_\theta$ be a nontrivial element of $U$. Since the $\theta$-preimage $u$ of $u_\theta$ is a parabolic isometry of $\hH$, there is only one fixed point $\xi$ of $u$ on $\partial \hH$.
Furthermore, the $\theta$-preimage of every element of $U$ fixes the point $\xi$ uniquely.
Noting that $U$ is a normal subgroup of $\overline{\Gamma}_\theta^\circ$, it easily follows that every element in the $\theta$-preimage of $\overline{\Gamma}_\theta^\circ$ must fix the point $\xi$.
This means that  the $\theta$-preimage of $\overline{\Gamma}_\theta^\circ$ is contained in the stabilizer subgroup of $\xi$ in $\Sp$ and thus $\xi$ is an $\Gamma$-invariant point.
It contradicts the assumption that $\Gamma$ is nonelementary. Therefore $U$ must be trivial.
\end{proof}

\begin{proof}[Proof of Theorem \ref{thm:main}]
From corollary \ref{cor:sp}, it follows that the noncompact simple factor $H$ of $L$ in the decomposition $\overline{\Gamma}_\theta^\circ=(LT)\ltimes U$ is isomorphic to either $\mathrm{SO}(k,1)^\circ$ or $\mathrm{SU}(k,1)$ for some $1\leq k\leq n$.
Thus the $\theta$-preimage of $H$ preserves either a real hyperbolic $k$-subspace or a complex hyperbolic $k$-subspace of $\hH$.
It is well known that every real hyperbolic $k$-subspace is contained in a complex hyperbolic $k$-subspace. We may thus assume that the $\theta$-preimage of $H$ preserves a complex hyperbolic $k$-subspace $\mathbf H_{\mathbb C}^k$ of $\hH$. 
Then the $\theta$-preimage of every simple factor of $L$ preserves $\mathbf H_{\mathbb C}^k$ since $H$ and any other simple factor of $L$ centralize each other.
Similarly the $\theta$-preimage of torus $T$ in $\overline \Gamma^\circ_\theta$ also preserves $\mathbf H_{\mathbb C}^k$.
In the end, the $\theta$-preimage of $\overline \Gamma^\circ_\theta$ preserves $\mathbf H_{\mathbb C}^k$.
Since $\overline \Gamma^\circ_\theta$ is a finite index subgroup of $\overline \Gamma_\theta$, the $\theta$-preimage of $\overline \Gamma_\theta$ stabilizes $\mathbf H_{\mathbb C}^k$ either and so does $\Gamma$.
\end{proof}

\begin{proof}[Proof of Theorem \ref{cor:1}]
By Theorem \ref{thm:main}, $\Gamma$ preserves a complex hyperbolic $k$-subspace in $\hH$. By conjugation, we may assume that $\Gamma$ preserves the complex hyperbolic $k$-subspace $\mathbf H_{\mathbb C}^k$ defined as
$$ \mathbb P\left( \{ (0,\ldots,0,z_1,\ldots,z_{k+1}) \in \mathbb C^{n+1} \ | \ \|z_1\|^2+\cdots+\|z_k\|^2-\|z_{k+1}\|^2<0\}\right).$$
Then it can be easily shown that the stabilizer group of $\mathbf H_{\mathbb C}^k$ in $\Sp$ is $\mathrm{Sp}(n-k)\oplus \mathrm{U}(k,1)$. Hence if $k \neq n$, then $\Gamma$ is not irreducible. Therefore $k=n$, which implies that $\Gamma$ is conjugate to a subgroup of $\mathrm{U}(n,1)$.
\end{proof}

Set $\mathbf{0}=[0,1,-1] \in \mathbb P(V_0)=\partial \mathbf H_{\H}^2$ and $\infty=[0,1,1]$. Then we recover the result of \cite{KK18}.

\begin{corollary}\label{cor:2}
Let $\Gamma$ be a nonelementary discrete subgroup of $\mathrm{Sp}(2,1)$ containing a loxodromic element fixing $\mathbf 0$ and $\infty$. If the trace field of $\Gamma$ is contained in a maximal abelian subfield of $\H$, then $\Gamma$ is conjugate to a subgroup of $\mathrm{U}(2,1)$.
\end{corollary}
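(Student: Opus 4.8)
The plan is to deduce the corollary from Theorem \ref{thm:main} and Theorem \ref{cor:1} in the case $n=2$, using the loxodromic hypothesis to handle the low-dimensional degeneracy. First I would note that a maximal abelian subfield of $\H$ is in particular commutative, so any trace field contained in it is commutative, and the hypotheses of Theorem \ref{thm:main} are satisfied. To simplify the field while preserving the loxodromic normalization, I would conjugate by the scalar matrix $Q=qI$ supplied by Lemma \ref{lem:mas}: being scalar, $Q$ fixes both $\mathbf 0=[0,1,-1]$ and $\infty=[0,1,1]$ projectively and carries the loxodromic generator to another loxodromic fixing the same pair, while moving the trace field into $\C$. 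Applying Theorem \ref{thm:main} with $n=2$ then produces a $\Gamma$-invariant totally geodesic $\mathbf{H}^k_{\mathbb C}$ with $k\in\{1,2\}$; since the two fixed points of a loxodromic preserving a complex hyperbolic subspace must lie on its boundary, the geodesic joining $\mathbf 0$ and $\infty$ lies in this $\mathbf{H}^k_{\mathbb C}$.

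If $k=2$, the computation in the proof of Theorem \ref{cor:1} identifies the stabilizer of $\mathbf{H}^2_{\mathbb C}$ in $\mathrm{Sp}(2,1)$ with $\mathrm{Sp}(0)\oplus\mathrm{U}(2,1)=\mathrm{U}(2,1)$, so $\Gamma$ is already conjugate into $\mathrm{U}(2,1)$. The substantive case is $k=1$, where $\Gamma$ lands in the stabilizer $\mathrm{Sp}(1)\oplus\mathrm{U}(1,1)$ of $\mathbf{H}^1_{\mathbb C}$. Here I would reduce the assertion to a statement about the projection $\pi:\Gamma\to\mathrm{Sp}(1)$ onto the compact factor. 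Writing $\gamma\in\Gamma$ as a block pair $(q,A)$ with $q\in\mathrm{Sp}(1)$ and $A\in\mathrm{U}(1,1)$, the complex extension $\mathbf{H}^2_{\mathbb C}\supset\mathbf{H}^1_{\mathbb C}$ is $\Gamma$-invariant exactly when every $q$ lies in the circle $\mathrm{U}(1)\subset\mathrm{Sp}(1)$. Because every abelian subgroup of $\mathrm{Sp}(1)\cong\mathrm{SU}(2)$ is contained in a maximal torus, it suffices to show $\pi(\Gamma)$ is abelian: then a conjugation by a suitable $\mathrm{diag}(s,1,1)$ with $|s|=1$ carries $\pi(\Gamma)$ into $\mathrm{U}(1)$ and places $\Gamma$ inside $\mathrm{U}(2,1)$.

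To obtain the abelianness of $\pi(\Gamma)$ I would exploit commutativity of the trace field through the block splitting $tr(\gamma)=q+tr(A)$, in which $tr(A)$ ranges over the complex line carrying the complex structure of $\mathbf{H}^1_{\mathbb C}$; the loxodromic element, whose axis is the fixed geodesic $\mathbf 0\infty$, rigidifies that structure, and requiring all $tr(\gamma)$ to lie in one commutative subfield then forces the quaternionic parts $q$ into a common commutative subfield of $\H$, so that $\pi(\Gamma)$ is abelian. The main obstacle I anticipate is precisely this last step, and it is sharpened by the fact emphasized in the text that the quaternionic trace is \emph{not} invariant under conjugation in $\mathrm{Sp}(2,1)$: one cannot freely conjugate the invariant $\mathbf{H}^1_{\mathbb C}$ given by Theorem \ref{thm:main} to the standard complex structure without perturbing the trace field, so reconciling the complex structure of the invariant subspace with that of the trace field is the delicate point, and it is here that the loxodromic fixing the explicit pair $\mathbf 0,\infty$ does the work of pinning the two structures together. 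Once that alignment is secured, the remaining verifications of the block identity $tr(\gamma)=q+tr(A)$ and the extraction of abelianness are routine.
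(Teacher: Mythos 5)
Your reduction to the case $k=1$, with $\Gamma$ landing in $\mathrm{Sp}(1)\oplus\mathrm{U}(1,1)$ and the block identity $tr(\gamma)=q+tr(A)$, matches the paper's proof. But you then leave the decisive step unproved: you only assert that commutativity of the trace field ``forces the quaternionic parts $q$ into a common commutative subfield,'' and you yourself flag this as the main anticipated obstacle rather than resolving it. As written, the proposal has a genuine gap at exactly the point where the argument must close.

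The missing observation is that, after the conjugation by $Q$ you already performed (so that every trace lies in $\mathbb C$), no abelianness of $\pi(\Gamma)$ is needed at all --- the conclusion is elementwise. For $\gamma=(q,A)$ with $A\in\mathrm{U}(1,1)$ a complex matrix, $tr(A)\in\mathbb C$ automatically, and by hypothesis $tr(\gamma)=q+tr(A)\in\mathbb C$; hence $q=tr(\gamma)-tr(A)\in\mathbb C$, and since $q$ is a unit quaternion, $q\in\mathrm{U}(1)$. Thus $\Gamma\subset\mathrm{U}(1)\oplus\mathrm{U}(1,1)\subset\mathrm{U}(2,1)$ with no further conjugation of the $\mathrm{Sp}(1)$-factor --- this is precisely the paper's final step. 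Your proposed detour (prove $\pi(\Gamma)$ abelian, then conjugate into a maximal torus of $\mathrm{Sp}(1)$ by $\mathrm{diag}(s,1,1)$) would also suffice if the abelianness were established, but it is strictly harder than the direct computation, and the ``delicate point'' you worry about --- reconciling the complex structure of the invariant $\mathbf H^1_{\mathbb C}$ with that of the trace field --- simply does not arise once you note that $tr(A)$ is complex for every element of $\mathrm{U}(1,1)$.
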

\begin{proof}
By Theorem \ref{thm:main}, $\Gamma$ stabilizes a totally geodesic submanifold isometric to $\mathbf H_{\mathbb C}^1$ or $\mathbf H_{\mathbb C}^2$.
If $\Gamma$ stabilizes a totally geodesic submanifold isometric to $\mathbf H_{\mathbb C}^2$, it immediately follows that $\Gamma$ is conjugate to a subgroup of $\mathrm{U}(2,1)$.
We now suppose that $\Gamma$ preserves a totally geodesic submanifold isometric to $\mathbf H_{\mathbb C}^1$. 
As mentioned at the beginning of Section \ref{sec:proof}, we may assume that the trace of each element of $\Gamma$ is a complex number.
Noting that any loxodromic element fixing $\mathbf 0$ and $\infty$ stabilizes the unique $\mathbf H_{\mathbb C}^1$ defined as 
$$\{ [z_1,z_2,z_3] \in \mathbb P(V_-)=\mathbf H_{\H}^2 \ | \ z_1=0 \},$$
one can show that $\Gamma$ is a subgroup of $\mathrm{Sp}(1)\oplus \mathrm{U}(1,1)$.
Any element of $\mathrm{Sp}(1)\oplus \mathrm{U}(1,1)$ with complex trace is contained in $\mathrm{U}(1)\oplus \mathrm{U}(1,1)$.
Thus $\Gamma$ is a subgroup of $\mathrm{U}(1)\oplus \mathrm{U}(1,1) \subset \mathrm{U}(2,1)$.
\end{proof}

\end{document}